\theoremstyle{plain}
\newtheorem{theorem}{Theorem}[section]
\newtheorem{lemma}[theorem]{Lemma}
\newtheorem{proposition}[theorem]{Proposition}
\newtheorem*{theorem*}{Theorem}
\newtheorem*{lemma*}{Lemma}
\newtheorem*{proposition*}{Proposition}
\newtheorem*{corollary*}{Corollary}
\theoremstyle{definition}
\newtheorem{rmk}[theorem]{Remark}
\newtheorem*{definition*}{Definition}
\newtheorem*{example*}{Example}
\newtheorem*{construction*}{Construction}
\newtheorem*{notation*}{Notation}
\newtheorem*{rmk*}{Remark}
\newcommand{\co}{\colon}
\begin{document}

\title{Hexatonic Systems and Dual Groups in Mathematical Music Theory}

\author{Cameron Berry}
\address{Department of Mathematics, Michigan State University,
619 Red Cedar Road, East Lansing, MI 48824}
\email{berrycam@msu.edu}

\author{Thomas M. Fiore}
\address{Thomas M. Fiore, Department of Mathematics and Statistics,
University of Michigan-Dearborn, 4901 Evergreen Road, Dearborn,
MI 48128 \newline  and \newline
NWF I - Mathematik,
Universit\"at Regensburg,
Universit\"atsstra{\ss}e 31,
93040 Regensburg,
Germany }
\email{tmfiore@umich.edu}
\urladdr{http://www-personal.umd.umich.edu/~tmfiore/}

\begin{abstract}
Motivated by the music-theoretical work of Richard Cohn and David Clampitt on late-nineteenth century harmony, we mathematically prove that the $PL$-group of a hexatonic cycle is dual (in the sense of Lewin) to its $T/I$-stabilizer.
Our point of departure is Cohn's notions of maximal smoothness and hexatonic cycle, and the symmetry group of the 12-gon; we do {\it not} make use of the duality between the $T/I$-group and $PLR$-group. We also discuss how some ideas in the present paper could be used in the proof of $T/I$-$PLR$ duality by Crans--Fiore--Satyendra in \cite{cransfioresatyendra}.
\end{abstract}

\thanks{A slightly revised version of this manuscript will be published in {\it Involve: A Journal of Mathematics}, published by Mathematical Sciences Publishers.}

\maketitle


\section{Introduction: Hexatonic Cycles and Associated Dual Groups} \label{sec:Introduction}
Why did late nineteenth century composers, such as Franck, Liszt, Mahler, and Wagner, continue to privilege consonant triads over other tone collections, while simultaneously moving away from the diatonic scale and classical tonality?

Richard Cohn proposes an answer in \cite{cohn1996}, independent of acoustic consonance: major and minor triads are preferred because they can form {\it maximally smooth cycles}. Consider for instance the following sequence of consonant triads, called a {\it hexatonic cycle} by Cohn.
\begin{equation} \label{equ:hexatonic_cycle_Eflat}
E\flat,e\flat,B,b,G,g,E\flat
\end{equation}
We have indicated major chords with capital letters and minor chords with lowercase letters.
Although the motion from a major chord to its parallel minor, e.g. $E\flat$ to $e\flat$, $B$ to $b$, and $G$ to $g$, is distinctly non-diatonic, this sequence has cogent properties of importance to late-Romantic composers, as axiomatized
in Cohn's notion of maximally smooth cycle \cite[page 15]{cohn1996}.
\begin{itemize}
\item
It is a {\it cycle} in the sense that the first and last chords are the same but all others are different. A {\it cycle} is required to contain more than three chords.
\item
All of the chords are in one ``set class''; in this case each chord is a consonant triad.
\item
Every transition is {\it maximally smooth} in the sense that two notes stay the same while the third moves by the smallest possible interval: a semitone.
\end{itemize}

Cohn considered movement along this sequence transformationally as an action by a cyclic group of order 6. Additionally, David Clampitt considered in \cite{clampittParsifal} movement along this sequence via $P$ and $L$, and also via certain rotations and reflections. As usual, we denote by $P$ the ``parallel'' transformation that sends a major or minor chord to its parallel minor or major chord, respectively. We denote by $L$ the ``leading tone exchange'' transformation, which moves the root of a major chord down a semitone and the fifth of a minor chord up a semitone, so the $L$ sends consonant triads $e\flat$ to $B$, and $b$ to $G$, and $g$ to $E\flat$. The hexatonic cycle \eqref{equ:hexatonic_cycle_Eflat} is then positioned in network \eqref{equ:PL_network_of_hexatonic_cycle}, with alternating $P$ and $L$ transformations between the nodes.
\begin{equation} \label{equ:PL_network_of_hexatonic_cycle}
\entrymodifiers={=<1.7pc>[o][F-]}
\xymatrix@C=3.5pc@R=3.5pc{E\flat \ar[r]^{P}  &  e\flat \ar[r]^L & B \ar[r]^P & b \ar[r]^L & G \ar[r]^P & g \ar@/^2pc/[lllll]^L }
\end{equation}

Wagner's Grail motive in {\it Parsifal} can be interpreted in terms of the network \eqref{equ:PL_network_of_hexatonic_cycle}, as proposed by David Clampitt in \cite{clampittParsifal}. A small part of Clampitt's analysis of the first four chords is pictured in Figure~\ref{fig:grail_notes_from_Clampitt_and_its_PL_interpretation}. Clampitt includes the final $D\flat$ chord, which lies outside of the hexatonic cycle \eqref{equ:hexatonic_cycle_Eflat}, into his interpretation via a conjugation-modulation applied to a certain subsystem. A third interpretation, in addition to the cyclic one of Cohn \cite[Example~5]{cohn1996} and the $PL$ interpretation in Figure~\ref{fig:grail_notes_from_Clampitt_and_its_PL_interpretation}, was also proposed by Clampitt, this time in terms of the transpositions and inversions $\{T_0,T_4,T_8,I_1,I_5,I_9\}$. Clampitt observes that this group and the $PL$-group form {\it dual groups in the sense of Lewin} \cite{LewinGMIT}, via their actions on the hexatonic set of chords in \eqref{equ:hexatonic_cycle_Eflat}. The perceptual basis of all three groups is explained in \cite{clampittParsifal}.

\begin{figure}[h]
    \centering
        \begin{subfigure}[b]{0.4\textwidth}
        \centering
        $E\flat$, $b$, $G$, $e\flat$, $D\flat$ \vspace{.3in}
        \caption{Chord sequence in Grail motive from Wagner, {\it Parsifal}, Act 3, measures 1098--1100, see Clampitt \cite[Example 1]{clampittParsifal}.}
    \end{subfigure}%
   \hspace{.2in}
    \begin{subfigure}[b]{0.55\textwidth}
        \centering
        \begin{math}
\entrymodifiers={=<1.7pc>[o][F-]}
\xymatrix@C=3pc@R=3.5pc{E\flat \ar[r]^{PLP} \ar@/_2pc/[rrr]_{P}  &  b \ar[r]^L & G \ar[r]^{PLP} & e\flat}
\end{math}
        \caption{First four chords of the Grail motive in a hexatonic $PL$-network of Clampitt. Notice that the bottom arrow is the composite of the three top arrows, and goes in the opposite direction of the bottom arrow of diagram \eqref{equ:PL_network_of_hexatonic_cycle}.}
    \end{subfigure}
    \caption{Wagner's Grail motive and Clampitt's interpretation of first four chords using the $PL$-network in diagram \eqref{equ:PL_network_of_hexatonic_cycle}.} \label{fig:grail_notes_from_Clampitt_and_its_PL_interpretation}
\end{figure}

The contribution of the present article is to directly prove that the $PL$-group and the group $\{T_0,T_4,T_8,I_1,I_5,I_9\}$ in Clampitt's article are dual groups acting on \eqref{equ:hexatonic_cycle_Eflat}. Our point of departure is the hexatonic cycle \eqref{equ:hexatonic_cycle_Eflat}, the standard action of the dihedral group of order 24 on the 12-gon, and the Orbit-Stabilizer Theorem. We do {\it not} use the duality of the $T/I$-group and $PLR$-group.  Some arguments in Section~\ref{sec:Main_Theorem} are similar to arguments of Crans--Fiore--Satyendra \cite{cransfioresatyendra}, but there are important differences, see Remark~\ref{rem:differences_and_similarities}.

Just how special are the consonant triads with regard to the maximal smoothness property? According to \cite{cohn1996}, only 6 categories of tone collections support maximally smooth cycles: singletons, consonant triads, pentatonic sets, diatonic sets, complements of consonant triads, and 11-note sets. Clearly the singletons and 11-note sets do not give musically significant cycles. The pentatonic sets and the diatonic sets each support only one long cycle, which exhausts all 12 of their respective exemplars. The consonant triads and their complements, {\it on the other hand,} support short cycles that do not exhaust all of their transpositions and inversions. The maximally smooth cycles of consonant triads are enumerated as sets as follows.
\begin{equation} \label{equ:Hex1}
\{E\flat,e\flat,B,b,G,g\}
\end{equation}

\begin{equation} \label{equ:Hex2}
\{E,e,C,c,A\flat,a\flat\}
\end{equation}

\begin{equation} \label{equ:Hex3}
\{F,f,C\sharp, c\sharp, A,a\}
\end{equation}

\begin{equation} \label{equ:Hex4}
\{F\sharp,f\sharp,D,d,B\flat,b\flat\}
\end{equation}

These are the four {\it hexatonic cycles} of Cohn, see \cite[page 17]{cohn1996}. They (and their reverses) are the only short maximally smooth cycles that exist in the Western chromatic scale.

\section{Mathematical and Musical Preliminaries: Standard Dihedral Group Action on Consonant Triads and the Orbit-Stabilizer Theorem} \label{sec:Preliminaries}

We quickly recall the standard preliminaries about consonant triads, transposition, inversion, $P$, $L$, and the Orbit-Stabilizer Theorem. A good introduction to this very well-known background material is \cite{cransfioresatyendra}. Since this background has been treated in many places, we merely rapidly introduce the notation and indicate a few sources.

\subsection{Consonant Triads}
We encode pitch classes using the standard $\mathbb{Z}_{12}$ model, where $C=0$, $C\sharp=D\flat=1$, and so on up to $B=11$. Via this bijection we freely refer to elements of $\mathbb{Z}_{12}$ as {\it pitch classes}. Major chords are indicated as ordered 3-tuples in $\mathbb{Z}_{12}$ of the form $\langle x,x+4,x+7 \rangle$, where $x$ ranges through $\mathbb{Z}_{12}$. Minor chords are indicated as 3-tuples $\langle x+7,x+3,x\rangle$ with $x \in \mathbb{Z}_{12}$. We choose these orderings to make simple formulas for $P$ and $L$, this is not a restriction for applications, as the framework was extended in \cite{FioreNollSatyendraPermutations} to allow any orderings. We call the set of 24 major and minor triads $\text{\it Triads}$, this is the set of {\it consonant triads}. The letter names are indicated in Table~\ref{table:consonant_triads}.
\begin{table}
  \centering
  $\begin{tabular}{|r|l|}
\hline \text{Major Triads} & \text{Minor Triads} \\ \hline
$C=\langle 0,4, 7\rangle$ & $\langle 0,8,5 \rangle = f $ \\
$C \sharp = D \flat=\langle 1,5, 8\rangle$ & $\langle 1,9, 6 \rangle=f \sharp=g \flat$ \\
$D=\langle 2,6, 9\rangle$ & $\langle 2,10,7\rangle = g$ \\
$D \sharp=E \flat=\langle 3,7, 10\rangle$ & $\langle3, 11,8 \rangle=g \sharp=a \flat$ \\
$E=\langle 4,8, 11\rangle$ & $\langle 4,0, 9\rangle=a$ \\
$F=\langle 5,9,  0 \rangle$ & $\langle 5,1,10  \rangle=a \sharp=b \flat$ \\
$F \sharp=G \flat=\langle 6,10, 1 \rangle$ & $\langle 6,2,11\rangle=b$ \\
$G=\langle 7,11, 2 \rangle$ & $\langle 7,3,0\rangle =c$ \\
$G \sharp=A \flat=\langle 8,0, 3 \rangle$ & $\langle8, 4,1\rangle =c \sharp=d \flat$ \\
$A=\langle 9,1, 4 \rangle$ & $\langle9, 5,2\rangle =d$ \\
$A \sharp=B \flat=\langle 10,2,5 \rangle$ & $\langle 10,6,3\rangle=d \sharp=e \flat$ \\
$B =\langle 11,3,6 \rangle$ & $\langle 11, 7,4\rangle =e$ \\
\hline
\end{tabular}$
  \caption{The set of consonant triads, denoted $\text{\it Triads}$, as displayed on page 483 of \cite{cransfioresatyendra}.}\label{table:consonant_triads}
\end{table}

\subsection{Transposition and Inversion, and $P$ and $L$}\label{subsec:TI-PL} The twelve-tone operations {\it transposition} $T_n\co \mathbb{Z}_{12} \to \mathbb{Z}_{12}$ and {\it inversion} $I_n\co \mathbb{Z}_{12} \to \mathbb{Z}_{12}$ are
\begin{displaymath}
T_n(x)=x+n \hspace{.75in} \text{ and } \hspace{.75in} I_n(x)=-x+n
\end{displaymath}
for $n \in \mathbb{Z}_{12}$.
These 24 operations are the symmetries of the 12-gon, when we consider $0$ through $11$ as arranged on the face of a clock. In the music-theory tradition, this group is called the $T/I$-group (the ``/'' does {\it not} indicate any kind of quotient). The unique reflection of the 12-gon which interchanges $m$ and $n$ is $I_{m+n}$, as can be verified by direct computation.

Many composers, for instance Schoenberg, Berg, and Webern, utilized these {\it mod} 12 transpositions and inversions. These functions and their compositional uses have been thoroughly explored by composers, music theorists, and mathematicians, see for example Babbitt \cite{Babbitt_SomeAspects}, Forte \cite{Forte_Structure}, Fripertinger--Lackner \cite{FripertingerLackner_ToneRowsAndTropes}, Hook \cite{HookCombinatorial}, Hook--Peck \cite{HookPeckIntroduction}, McCartin \cite{McCartinPrelude}, Mead \cite{MeadRemarks}, Morris \cite{MorrisCompositionWithPitchClasses,MorrisClassNotes,MorrisClassNotesAdvanced,MorrisRemarks}, and Rahn \cite{RahnBasicAtonalTheory}. Indeed, the three recent papers \cite{FripertingerLackner_ToneRowsAndTropes,MeadRemarks,MorrisRemarks} together contain over 100 references.

We consider these bijective functions on $\mathbb{Z}_{12}$ also as bijective functions $\text{\it Triads} \to \text{\it Triads}$ via their componentwise evaluation on consonant triads:
\begin{equation} \label{equ:TI-action_on_triads}
T_n\langle x_1,x_2,x_3 \rangle=\langle T_n x_1,T_n x_2, T_n x_3 \rangle\hspace{.25in} \text{ and } \hspace{.25in} I_n\langle x_1,x_2,x_3 \rangle=\langle I_n x_1,I_n x_2, I_n x_3 \rangle.
\end{equation}

Also on the set $\text{\it Triads}$ of consonant triads (with the indicated ordering), but not on the level of individual pitch classes, we have the bijective functions $P,L\co \text{\it Triads} \to \text{\it Triads}$ defined by
\begin{equation} \label{equ:PL-action_on_triads}
P\langle x_1,x_2,x_3 \rangle=I_{x_1+x_3}\langle x_1,x_2,x_3 \rangle\hspace{.25in} \text{ and } \hspace{.25in} L\langle x_1,x_2,x_3 \rangle=I_{x_2+x_3}\langle x_1,x_2,x_3 \rangle.
\end{equation}
As remarked above, $P$ stands for ``parallel'' and $L$ stands for ``leading tone exchange.''

We consider $T_n$, $I_n$, $P$, and $L$ as elements of the symmetric group $\text{\rm Sym}(\text{\it Triads})$.

\begin{proposition} \label{prop:PL_TI_commute}
The bijections $P$ and $L$ commute with $T_n$ and $I_n$ as elements of the symmetric group $\text{\rm Sym}(\text{\it Triads})$.
\end{proposition}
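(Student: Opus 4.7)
The plan is to reduce everything to a single conjugation identity in the $T/I$-group acting on $\mathbb{Z}_{12}$. Specifically, I will establish that for every $\sigma$ in the $T/I$-group and every $m,n \in \mathbb{Z}_{12}$,
\[
\sigma \circ I_{m+n} \;=\; I_{\sigma(m)+\sigma(n)} \circ \sigma
\]
as elements of $\text{\rm Sym}(\mathbb{Z}_{12})$. Conceptually this says that conjugation by $\sigma$ carries the unique reflection of the 12-gon interchanging $m$ and $n$ (namely $I_{m+n}$, as recalled in Section~\ref{subsec:TI-PL}) to the unique reflection interchanging $\sigma(m)$ and $\sigma(n)$. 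I would prove it by two short direct checks: for $\sigma = T_k$ both sides are the affine map $x \mapsto -x + m + n + k$, and for $\sigma = I_k$ both sides are $x \mapsto x + k - m - n$.

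Next, because $T_n$ and $I_n$ act componentwise on triples by \eqref{equ:TI-action_on_triads}, this identity on $\mathbb{Z}_{12}$ lifts componentwise to an identity on triples:
\[
\sigma\bigl(I_{m+n}\langle x_1,x_2,x_3\rangle\bigr) \;=\; I_{\sigma(m)+\sigma(n)}\bigl(\sigma\langle x_1,x_2,x_3\rangle\bigr).
\]
Along the way I would record that $\sigma$ sends $\text{\it Triads}$ to $\text{\it Triads}$ in the chosen orderings (transpositions preserve major/minor and their orderings, while inversions swap them consistently), so all expressions genuinely live in $\text{\rm Sym}(\text{\it Triads})$.

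Finally, I would apply this lifted identity to the specific indices featured in the definitions \eqref{equ:PL-action_on_triads}. For $P$, setting $m=x_1$, $n=x_3$:
\[
\sigma P\langle x_1,x_2,x_3\rangle \;=\; \sigma I_{x_1+x_3}\langle x_1,x_2,x_3\rangle \;=\; I_{\sigma(x_1)+\sigma(x_3)}\,\sigma\langle x_1,x_2,x_3\rangle \;=\; P\sigma\langle x_1,x_2,x_3\rangle,
\]
the last equality using $\sigma\langle x_1,x_2,x_3\rangle = \langle \sigma(x_1),\sigma(x_2),\sigma(x_3)\rangle$ together with the defining formula for $P$ applied to that new triple. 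The argument for $L$ is identical with $m = x_2$, $n = x_3$.

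The main obstacle, which is more bookkeeping than mathematical, is the passage from $\mathbb{Z}_{12}$ to $\text{\it Triads}$: one must check that the index of the inversion transforms correctly when $\sigma$ is pushed through componentwise, and that the chosen ordering of major and minor triads is compatible with the action so that both $P\sigma$ and $\sigma P$ actually land in $\text{\it Triads}$. A purely mechanical alternative would be to evaluate the eight composites $PT_n, T_nP, PI_n, I_nP, LT_n, T_nL, LI_n, I_nL$ on a generic $\langle x_1,x_2,x_3\rangle$, but the conjugation viewpoint above makes the commutativity transparent and makes the parallel between $P$ and $L$ a single line.
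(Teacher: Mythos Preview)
Your proof is correct and is essentially a fleshed-out version of the paper's own argument, which simply asserts that the result is a straightforward computation using equations \eqref{equ:TI-action_on_triads} and \eqref{equ:PL-action_on_triads} and points to \cite{fiorenollsatyendraSchoenberg,fioresatyendra2005} for broader context. Your organizing identity $\sigma\circ I_{m+n}=I_{\sigma(m)+\sigma(n)}\circ\sigma$ is exactly the sort of conjugation relation those references exploit, so you have supplied the details the paper leaves implicit.
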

\begin{proof}
This is a straightforward computation using equations \eqref{equ:TI-action_on_triads} and \eqref{equ:PL-action_on_triads}. This computation has been discussed in broader contexts in \cite{fiorenollsatyendraSchoenberg} and \cite{fioresatyendra2005}.
\end{proof}

\subsection{Orbit-Stabilizer Theorem}

Suppose $S$ is a set with a left group action by a group $G$. Recall that the \emph{orbit of an element $Y\in S$} is
$${\text{\rm orbit of } Y}:=\{gY \mid g\in G \}.$$
The \emph{stabilizer group of an element $Y \in S$} is
$$G_Y:=\{g\in G \mid gY=Y \}.$$

\begin{theorem}[Orbit-Stabilizer Theorem]
\label{thm:Orbit Stabilizer Theorem}
Let $G$ be a group with an action on a set $S$. Neither $G$ nor $S$ is assumed to be finite. Then the assignment
$$\xymatrix{G/G_Y \ar[r] & \text{\rm orbit of } Y}$$
$$\xymatrix{gG_Y \ar@{|->}[r] & gY}$$
is a bijection. In particular, if $G$ is finite, then each orbit is finite, and
\begin{equation} \label{equ:OS}
\left\vert{G}\right\vert/\left\vert{G_Y}\right\vert=\left\vert{\text{\rm orbit of } Y}\right\vert.
\end{equation}
\end{theorem}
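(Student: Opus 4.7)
The plan is to verify the claimed assignment $gG_Y \mapsto gY$ is a well-defined bijection by a single chain of equivalences, and then read off the cardinality formula from Lagrange's theorem in the finite case.

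First I would check that the formula $gG_Y \mapsto gY$ really does land in the orbit of $Y$: by definition $gY$ is in the orbit of $Y$. Next I would establish well-definedness and injectivity simultaneously by proving the equivalence
\begin{equation*}
gG_Y = g'G_Y \iff gY = g'Y.
\end{equation*}
The left side is equivalent to $g^{-1}g' \in G_Y$, which by the definition of the stabilizer is equivalent to $(g^{-1}g')Y = Y$, and applying $g$ to both sides (using that the action of $g$ is a bijection of $S$) this is equivalent to $g'Y = gY$. Reading the chain left to right gives well-definedness; reading it right to left gives injectivity.

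For surjectivity I would just appeal to the definition of the orbit: every element of the orbit of $Y$ is of the form $gY$ for some $g \in G$, and $gY$ is the image of the coset $gG_Y$. This completes the proof that the assignment is a bijection.

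Finally, in the case that $G$ is finite, I would invoke Lagrange's theorem, which tells us $|G/G_Y| = |G|/|G_Y|$, and combine this with the bijection just established to obtain the formula $|G|/|G_Y| = |\text{orbit of } Y|$. There is no real obstacle here; the only subtlety worth flagging is to be careful that the equivalence in the central step uses the action's bijectivity on $S$, but this is automatic from the axioms of a group action.
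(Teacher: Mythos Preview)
Your argument is correct and is exactly the standard proof of the Orbit--Stabilizer Theorem. The paper, however, does not supply a proof of this classical result at all; it merely states the theorem and uses equation~\eqref{equ:OS} later, so there is no ``paper's own proof'' to compare against.
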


\subsection{Simple Transitivity} A group action of a group $G$ on a set $S$ is said to be {\it simply transitive} if for any $Y,Z \in S$ there is a unique $g \in G$ such that $gY=Z$. Informally, we also say the group $G$ is {\it simply transitive} if the sole action under consideration is simply transitive.

\begin{proposition} \label{prop:simple_transitivity_iff_each_stabilizer_trivial}
\leavevmode
\begin{enumerate}
\item \label{prop:simple_transitivity_iff_each_stabilizer_trivial:i}
An action of a group $G$ on a set $S$ is simply transitive if and only if it is transitive and every stabilizer $G_Y$ is trivial.
\item \label{prop:simple_transitivity_iff_each_stabilizer_trivial:ii}
Suppose $G$ is a finite group that acts on a set $S$. Then $G$ is simply transitive if and only if any two of the following three hold.
\begin{enumerate}
\item \label{a}
$G$ is transitive.
\item \label{b}
Every stabilizer $G_Y$ is trivial.
\item \label{c}
$G$ and $S$ have the same cardinality.
\end{enumerate}
In this case, the third condition also holds. \\ Another way to read this ``if and only if'' statement is: assuming $G$ is finite and any one of the conditions holds, $G$ is simply transitive if and only if another one of the conditions holds.
\item \label{prop:simple_transitivity_iff_each_stabilizer_trivial:iv}
Suppose a (not necessarily finite) group $H_1$ acts simply transitively on a set $S$, and a subgroup $H_2$ of $H_1$ acts  transitively on $S$ via its subaction. Then $H_1=H_2$.
\end{enumerate}
\end{proposition}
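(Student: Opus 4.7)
The plan is to prove (1) by directly unpacking the definition of simple transitivity, reduce (2) to (1) using only the Orbit-Stabilizer formula $|G|/|G_Y|=|\text{orbit of }Y|$, and dispose of (3) by a pointwise uniqueness argument that sidesteps any cardinality hypothesis on $H_1$.

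For part (1), I would unpack both directions. Existence of $g$ with $gY=Z$ gives transitivity, and comparing $gY=Y=eY$ for any $g\in G_Y$ forces $g=e$ by uniqueness, so every stabilizer is trivial. Conversely, assuming transitivity and trivial stabilizers, transitivity supplies some $g$ with $gY=Z$, and if also $hY=Z$ then $h^{-1}g\in G_Y=\{e\}$, so $g=h$.

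For part (2), each pair among the three conditions produces the remaining condition and also simple transitivity, by combining (1) with the Orbit-Stabilizer formula. The pair transitivity plus trivial stabilizers is simply transitive by (1), and then the formula gives $|G|=|S|$. The pair transitivity plus $|G|=|S|$ makes the orbit of $Y$ equal to $S$, so the formula forces $|G_Y|=1$. The pair trivial stabilizers plus $|G|=|S|$ makes every orbit have cardinality $|G|=|S|$, hence equal to $S$, giving transitivity. In each case, once two of the conditions are in hand, (1) upgrades them to simple transitivity.

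For part (3), I would prove $H_1\subseteq H_2$ elementwise. Fix any $Y\in S$. Given $h_1\in H_1$, set $Z:=h_1 Y$; transitivity of $H_2$ supplies some $h_2\in H_2\subseteq H_1$ with $h_2 Y=Z$, and simple transitivity of $H_1$ then forces $h_1=h_2\in H_2$. The main obstacle, such as it is, is precisely here: since $H_1$ is not assumed finite, one cannot argue via $|H_1|=|S|=|H_2|$, and the pointwise uniqueness from simple transitivity is what makes the infinite case go through. Beyond that caveat, nothing in the proposition goes past the definition of simple transitivity and the Orbit-Stabilizer formula already in hand.
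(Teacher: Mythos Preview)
Your proposal is correct and follows essentially the same approach as the paper: part~(1) via the definition, part~(2) via the Orbit--Stabilizer formula to cycle among the three conditions, and part~(3) via the uniqueness clause of simple transitivity. The only cosmetic differences are that the paper frames part~(3) as a contradiction rather than a direct inclusion, and explicitly records the converse direction in part~(2) (simple transitivity $\Rightarrow$ all three conditions), which you leave implicit but which is immediate from your part~(1).
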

\begin{proof}
\begin{enumerate}
\item
If the action is simply transitive, then it acts transitively and for each $Y \in S$, there is only one $g \in G$ with $gY=Y$, and hence each $G_Y$ is trivial.

Suppose $G$ acts transitively and for every $Y \in S$, the group $G_Y$ is trivial. Suppose $Y, Z \in S$ and $g_1, g_2 \in G$ satisfy $g_1 Y =Z$ and $g_2 Y =Z$. Then $Y=g_2^{-1}Z$ and $g_2^{-1} g_1 Y=Y$, so $g_2^{-1} g_1 \in G_Y=\{e\}$, and finally $g_1 =g_2$.
\item
We first prove that any two of the conditions implies the third and implies simple transitivity. \\
\ref{a}\ref{b}$\Rightarrow$ $G$ is simply transitive by \ref{prop:simple_transitivity_iff_each_stabilizer_trivial:i}, and equation \eqref{equ:OS} says $|G|/1=|S|$, so $|G|=|S|$ and \ref{c} holds. \\
\ref{b}\ref{c}$\Rightarrow$ Equation \eqref{equ:OS} says $|S|=|G|/1=|\text{orbit of }Y|$, so $S=\text{orbit of $Y$}$, and $G$ is transitive and \ref{a} holds, so $G$ is simply transitive by \ref{prop:simple_transitivity_iff_each_stabilizer_trivial:i}. \\
\ref{a}\ref{c}$\Rightarrow$ Equation \eqref{equ:OS} says $|G|/|G_Y|=|G|$, so $|G_Y|=1$ and \ref{b} holds, and $G$ is simply transitive by \ref{prop:simple_transitivity_iff_each_stabilizer_trivial:i}. \\
Now that we have shown any two of the conditions implies the third and simple transitivity, we want to see that simply transitivity implies all three conditions. From \ref{prop:simple_transitivity_iff_each_stabilizer_trivial:i}, simple transitivity implies \ref{a} and \ref{b}, and we have already seen \ref{a} and \ref{b} imply \ref{c}.
\item
Suppose $H_1$ properly contains $H_2$, and $h_1 \in H_1 \backslash H_2$. Fix a $Y \in S$ and define $Z:=h_1Y$. Then by the transitivity of $H_2$, there is an $h_2 \in H_2$ such that $Z=h_2Y$. But by the simple transitivity of $H_1$, we must have $h_1=h_2$, a contradiction.
\end{enumerate}
\end{proof}

\section{Main Theorem: Hexatonic Duality} \label{sec:Main_Theorem}

We next review the notion of dual groups, and then turn to the main result, Theorem~\ref{thm:hexatonic_duality} on Hexatonic Duality.  Recall that subgroups $G$ and $H$ of $\text{Sym}(S)$ are {\it dual in the sense of Lewin} \cite[page 253]{LewinGMIT} if each acts simply transitively on $S$ and each is the centralizer of the other.\footnote{Lewin did not formally make this definition, but on page 253 of \cite{LewinGMIT} he gave a more general situation that gives rise to examples of dual groups in the sense defined above. He starts with a group $G$, there called $STRANS$, assumed to act simply transitively on a set $S$, and then makes three claims without proof: 1) the centralizer $C(G)$ in $\text{\rm Sym}(S)$ acts simply transitively on $S$ (the centralizer $C(G)$ is called $STRANS'$ there); 2) the double centralizer $C(C(G))$ is contained in $G$, so actually $C(C(G))=G$; and 3) the two generalized interval systems with transposition groups $G$ and $C(G)$ respectively have interval preserving transformation groups precisely $C(G)$ and $G$ respectively. See Proposition~\ref{prop:dual_groups_ala_Lewin} for a proof of statements 1) and 2). Statement 3) is a consequence of the first two statements in combination with $COMM$-$SIMP$ duality, which was stated by Lewin on page 101 of \cite{LewinCommSimp} and partially proved in \cite[Theorem~3.4.10]{LewinGMIT}. For a review of $COMM$-$SIMP$ duality and more proof, see Fiore--Satyendra \cite[Section 2 and Appendix]{fioresatyendra2005}. For the equivalence of generalized interval systems and simply transitive group actions, see pages 157--159 of Lewin's monograph. The equivalence on the level of categories was proved by Fiore--Noll--Satyendra on page 10 of \cite{fiorenollsatyendraSchoenberg}. The undergraduate research project \cite{sternberg} of Sternberg worked out some of the details of Lewin's simply transitive group action associated to a generalized interval system and investigated the Fugue in F from Hindemith's {\it Ludus Tonalis}.}  Recall the \emph{centralizer of $G$ in $\text{\rm Sym}(S)$} is $$C(G)= \{\sigma \in \text{\rm Sym}(S) \mid \sigma g= g \sigma \; \text{ for all } \; g \in G\}.$$

Before turning to the main result, we prove two simultaneous redundancies in the notion of {\it dual groups}: instead of requiring the two groups to centralize each other, it is sufficient to merely require that they commute, and instead of requiring $H$ to act simply transitively, it is sufficient to merely require $H$ acts transitively.

\begin{proposition} \label{prop:Commutativity_is_Enough_for_Dual_Groups}
Let $S$ be a (not necessarily finite) set. Suppose $G \leq \text{\rm Sym}(S)$ acts simply transitively on $S$ and $H \leq \text{\rm Sym}(S)$ acts transitively on $S$. Suppose $G$ and $H$ commute in the sense that $gh=hg$ for all $g \in G$ and $h \in H$. Then $G$ and $H$ are dual groups. In particular, $H$ also acts simply transitively and $G$ and $H$ centralize one-another.
\end{proposition}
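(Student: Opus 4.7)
The plan is to bootstrap from the simple transitivity of $G$ and the commutation hypothesis to obtain everything else in three small steps.

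First I would unpack the commutation hypothesis: saying $gh = hg$ for every $g \in G$ and $h \in H$ is the same as saying $H \subseteq C(G)$ and $G \subseteq C(H)$, so one half of each centralizer containment is free. The real work is the reverse containments and the simple transitivity of $H$.

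Next I would establish that $H$ acts with trivial stabilizers, which is the technical heart of the argument. Suppose $h \in H_Y$ for some $Y \in S$, i.e.\ $hY = Y$. Given an arbitrary $Z \in S$, transitivity of $G$ gives some $g \in G$ with $Z = gY$, and then the commutation yields
\[
hZ = hgY = ghY = gY = Z.
\]
So $h$ fixes every element of $S$ and hence $h$ is the identity of $\text{Sym}(S)$. Combined with the transitivity hypothesis on $H$, Proposition~\ref{prop:simple_transitivity_iff_each_stabilizer_trivial}\ref{prop:simple_transitivity_iff_each_stabilizer_trivial:i} gives that $H$ acts simply transitively on $S$.

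Finally I would prove $C(G) = H$ and $C(H) = G$ by the same style of argument. For the first, let $\sigma \in C(G)$ and fix any $Y \in S$. By transitivity of $H$ there is some $h \in H$ with $hY = \sigma Y$. For an arbitrary $Z \in S$, simple transitivity of $G$ supplies a unique $g \in G$ with $gY = Z$, and then
\[
\sigma Z = \sigma g Y = g \sigma Y = g h Y = h g Y = h Z,
\]
using that both $\sigma$ and $h$ commute with $g$. Hence $\sigma = h \in H$, so $C(G) \subseteq H$ and in fact $C(G) = H$. The identical argument with the roles of $G$ and $H$ swapped (now using simple transitivity of $H$, which we have just proved) gives $C(H) = G$, completing the proof.

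The only subtle point, and what I would call the main obstacle, is the asymmetry in the hypothesis: $G$ is assumed simply transitive but $H$ is merely assumed transitive. This is why one must first do the stabilizer-triviality step for $H$ before attempting the centralizer equalities; the two calculations above both require simple transitivity on the ``fiber'' side to propagate an equality from a single $Y$ to all of $S$.
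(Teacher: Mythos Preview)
Your proof is correct and shares the same computational core as the paper's, but the organization differs. The paper applies the stabilizer computation to $C(G)$ rather than to $H$: it observes $C(G)\supseteq H$ is transitive, shows any $\sigma\in C(G)$ fixing a point must fix every $gs$ and hence be the identity, concludes $C(G)$ is simply transitive, and then invokes Proposition~\ref{prop:simple_transitivity_iff_each_stabilizer_trivial}\ref{prop:simple_transitivity_iff_each_stabilizer_trivial:iv} (a transitive subgroup of a simply transitive group equals the whole group) to obtain $H=C(G)$. You instead run the stabilizer computation on $H$ itself and then prove $C(G)\subseteq H$ by a direct element-matching argument; your third step is effectively an inline proof of Proposition~\ref{prop:simple_transitivity_iff_each_stabilizer_trivial}\ref{prop:simple_transitivity_iff_each_stabilizer_trivial:iv} for this situation. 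So the two routes have the same content, with the paper factoring out a lemma you reprove in place.

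One small correction to your closing commentary: neither displayed computation actually needs \emph{simple} transitivity on the ``fiber'' side---mere transitivity of $G$ suffices to write every $Z$ as $gY$, and likewise for $H$ in the swapped version. In particular, your third step (in both directions) does not logically depend on your second step; the simple transitivity of $H$ is a separate conclusion, not a prerequisite for the centralizer equalities.
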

\begin{proof}
We would like to first conclude from the simple transitivity of $G$, the transitivity of $H$, and the commutativity of $G$ and $H$, that the centralizer $C(G)$ acts simply transitively on $S$.

We claim that $C(G)$ acts simply transitively on $S$. It acts transitively, as $C(G) \supseteq H$ and $H$ acts transitively.
So, it suffices by Proposition~\ref{prop:simple_transitivity_iff_each_stabilizer_trivial}~\ref{prop:simple_transitivity_iff_each_stabilizer_trivial:i} to prove that, for each $s \in S$, the only element of $C(G)$ that fixes $s$ is the identity. Let $\sigma$ be an element of $C(G)$ that fixes $s$, and $g$ any element of $G$. Then,
$$
\aligned
\sigma s &= s \\
g \big( \sigma s \big) &= g \big( s \big) \\
\big( g \sigma \big) s  &= \big( g  s \big) \\
\big( \sigma g \big) s &= \big( g s \big) \\
\sigma \big( g s \big) &= \big( g s\big).
\endaligned
$$
So, not only does $\sigma$ fix $s$, but $\sigma$ also fixes $\big( g s\big)$ for every $g \in G$. That is to say $\sigma=\text{Id}_{S}$, and $C(G)$ acts simply transitively on $S$.

Now we have the transitive subgroup $H$ contained in the simply transitive group $C(G)$ by the assumed commutativity, so by Proposition~\ref{prop:simple_transitivity_iff_each_stabilizer_trivial}~\ref{prop:simple_transitivity_iff_each_stabilizer_trivial:iv},
$H=C(G)$, and $H$ also acts simply transitively.

To obtain $C(H) = G$, we use the newly achieved simple transitivity of $H$ and repeat the argument with the roles of $G$ and $H$ reversed.
\end{proof}

We may now use a result of Dixon--Mortimer to prove what Lewin stated on page 253 of \cite{LewinGMIT}, as suggested by Julian Hook, Robert Peck, and Thomas Noll. Parts \ref{prop:dual_groups_ala_Lewin:i} and \ref{prop:dual_groups_ala_Lewin:ii} of the following proposition were stated by Lewin.

\begin{proposition} \label{prop:dual_groups_ala_Lewin}
Let $S$ be a (not necessarily finite) set. Suppose $G \leq \text{\rm Sym}(S)$ acts simply transitively on $S$. Then
\begin{enumerate}
\item \label{prop:dual_groups_ala_Lewin:i}
The centralizer $C(G)$ in $\text{\rm Sym}(S)$ acts simply transitively on $S$.
\item \label{prop:dual_groups_ala_Lewin:ii}
The centralizer of the centralizer $C(C(G))$ is equal to $G$.
\item
Define $H:=C(G)$. Then $G$ and $H$ are dual groups.
\end{enumerate}
\end{proposition}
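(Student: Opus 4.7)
My plan is to prove part \ref{prop:dual_groups_ala_Lewin:i} first using Proposition~\ref{prop:Commutativity_is_Enough_for_Dual_Groups}, then deduce part \ref{prop:dual_groups_ala_Lewin:ii} from part \ref{prop:dual_groups_ala_Lewin:i} via Proposition~\ref{prop:simple_transitivity_iff_each_stabilizer_trivial}~\ref{prop:simple_transitivity_iff_each_stabilizer_trivial:iv}, and then read off part (3) from the previous two. The real work lies in part \ref{prop:dual_groups_ala_Lewin:i}: the hypotheses of Proposition~\ref{prop:Commutativity_is_Enough_for_Dual_Groups} require a transitive subgroup of $\text{\rm Sym}(S)$ that commutes elementwise with $G$, and the main task is to exhibit one.

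The natural candidate is the regular-representation companion of $G$ (this is essentially the Dixon--Mortimer result the authors are alluding to). Fix a base point $s_0 \in S$ and let $\phi \co G \to S$, $\phi(g) := g \cdot s_0$, be the bijection afforded by simple transitivity. For each $h \in G$, transport right multiplication by $h^{-1}$ across $\phi$ to obtain $\tilde{h} \in \text{\rm Sym}(S)$ given by $\tilde{h}(g \cdot s_0) := (g h^{-1}) \cdot s_0$ for all $g \in G$. I would then verify three routine facts: that $\tilde{G} := \{\tilde{h} \mid h \in G\}$ is a subgroup of $\text{\rm Sym}(S)$, that $\tilde{G}$ acts transitively on $S$ (in fact simply transitively, mirroring right multiplication on $G$), and that $g \tilde{h} = \tilde{h} g$ for all $g, h \in G$, the last following because left and right multiplication on a group commute by associativity. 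Proposition~\ref{prop:Commutativity_is_Enough_for_Dual_Groups} applied to $G$ and $\tilde{G}$ then yields that $\tilde{G}$ and $G$ are dual groups; in particular $\tilde{G} = C(G)$ and $C(G)$ acts simply transitively on $S$, proving part \ref{prop:dual_groups_ala_Lewin:i}.

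For part \ref{prop:dual_groups_ala_Lewin:ii}, note that every element of $G$ commutes with every element of $C(G)$ by definition, so $G \subseteq C(C(G))$. Applying part \ref{prop:dual_groups_ala_Lewin:i} with $C(G)$ in place of $G$ (legitimate because $C(G)$ acts simply transitively by part \ref{prop:dual_groups_ala_Lewin:i}) shows $C(C(G))$ acts simply transitively on $S$; since $G$ is a subgroup of $C(C(G))$ that already acts transitively, Proposition~\ref{prop:simple_transitivity_iff_each_stabilizer_trivial}~\ref{prop:simple_transitivity_iff_each_stabilizer_trivial:iv} forces $G = C(C(G))$. Part (3) is then immediate: with $H := C(G)$, part \ref{prop:dual_groups_ala_Lewin:i} gives that $H$ acts simply transitively, the hypothesis gives that $G$ acts simply transitively, $H = C(G)$ by construction, and $C(H) = C(C(G)) = G$ by part \ref{prop:dual_groups_ala_Lewin:ii}, so $G$ and $H$ satisfy the definition of dual groups.

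The main obstacle is the construction of $\tilde{G}$ in part \ref{prop:dual_groups_ala_Lewin:i}; once a commuting transitive companion to $G$ is in hand, the remainder of the proposition is essentially bookkeeping through Propositions~\ref{prop:Commutativity_is_Enough_for_Dual_Groups} and \ref{prop:simple_transitivity_iff_each_stabilizer_trivial}~\ref{prop:simple_transitivity_iff_each_stabilizer_trivial:iv}. One subtlety worth flagging is that $G$ need not lie in its own centralizer unless $G$ is abelian, so one cannot simply take $G$ itself as the companion; the regular-representation construction is needed precisely to produce a genuinely external commuting copy of $G$ sitting inside $\text{\rm Sym}(S)$.
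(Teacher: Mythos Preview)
Your proof is correct. The route for parts~\ref{prop:dual_groups_ala_Lewin:ii} and (3) matches the paper exactly: apply part~\ref{prop:dual_groups_ala_Lewin:i} to $C(G)$, then invoke Proposition~\ref{prop:simple_transitivity_iff_each_stabilizer_trivial}~\ref{prop:simple_transitivity_iff_each_stabilizer_trivial:iv} on the inclusion $G \subseteq C(C(G))$.

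For part~\ref{prop:dual_groups_ala_Lewin:i} your approach differs from the paper's. The paper simply cites Dixon--Mortimer \cite[Theorem~4.2A]{dixonmortimer} as a black box and moves on. You instead unpack that black box: you build the right-regular companion $\tilde{G}$ explicitly via the bijection $\phi\co G \to S$, verify that $\tilde{G}$ commutes with $G$ and acts transitively, and then feed the pair $(G,\tilde{G})$ into Proposition~\ref{prop:Commutativity_is_Enough_for_Dual_Groups} to conclude $\tilde{G}=C(G)$ and simple transitivity of $C(G)$. This is more work but has the virtue of being entirely self-contained within the paper's own toolkit---no external reference is needed, and Proposition~\ref{prop:Commutativity_is_Enough_for_Dual_Groups} earns its keep here rather than only in Theorem~\ref{thm:hexatonic_duality}. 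Your closing remark about why $G$ itself cannot serve as its own companion (absent abelianness) is a nice observation that the paper does not make explicit.
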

\begin{proof}
\begin{enumerate}
\item
This follows immediately from Dixon--Mortimer's \cite[Theorem 4.2A (i) and (ii), page 109]{dixonmortimer}. There {\it semi-regular} means point stabilizers are trivial and {\it regular} means simply transitive.
\item
Since $C(G)$ is simply transitive, we can apply Dixon--Mortimer's result to $C(G)$ to have the double centralizer $C(C(G))$ simply transitive. But $C(C(G))$ contains the simply transitive group $G$, so $C(C(G))=G$ by Proposition~\ref{prop:simple_transitivity_iff_each_stabilizer_trivial}~\ref{prop:simple_transitivity_iff_each_stabilizer_trivial:iv}.
\item
This follows directly from the preceding two by definition.
\end{enumerate}
\end{proof}

We now turn to the discussion of our main result.

Let $\text{\it Hex}$ be the set of chords in the hexatonic cycle \eqref{equ:hexatonic_cycle_Eflat} and $\underline{\text{\it Hex}}$ the set of underlying pitch classes of its chords, that is,
$$\text{\it Hex}:=\{E\flat,e\flat,B,b,G,g\},$$
$$\underline{\text{\it Hex}}:=\{2,3,6,7,10,11\}.$$

Our goal is to prove that the restriction of the $PL$-group to $\text{\it Hex}$ and the restriction of $\{T_0, T_4, T_8, I_1, I_5, I_9\}$ to $\text{\it Hex}$ are dual groups, and that each is dihedral of order 6. The strategy is to separately prove the unrestricted groups act simply transitively and are dihedral, and then finally to show that the restricted groups centralize each other. We begin with a characterization of the consonant triads contained in $\underline{\text{\it Hex}}$.

\begin{lemma} \label{lem:only_chords_in_Hex}
The only consonant triads of Table~\ref{table:consonant_triads} contained in $\underline{\text{\it Hex}}$ as subsets are the elements of $\text{\it Hex}$.
\end{lemma}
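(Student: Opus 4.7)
The plan is to classify the consonant triads contained in $\underline{\text{\it Hex}}$ by a residue argument modulo $4$. The key observation is that
$$\underline{\text{\it Hex}} = \{2,3,6,7,10,11\}$$
is precisely the set of elements of $\mathbb{Z}_{12}$ whose residue mod $4$ lies in $\{2,3\}$. Equivalently, $\underline{\text{\it Hex}}$ is the disjoint union of the two augmented triads $A = \{2,6,10\}$ and $A' = \{3,7,11\}$, each of which is a single coset of $\{0,4,8\} \leq \mathbb{Z}_{12}$.

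A consonant triad, viewed simply as a subset of $\mathbb{Z}_{12}$, has the form $\{r,r+4,r+7\}$ (major) or $\{r,r+3,r+7\}$ (minor). In both cases, exactly two of the three pitch classes share a common residue mod $4$, and the third lies in a different residue class: the pair differing by $4$ is $\{r,r+4\}$ in the major case and $\{r+3,r+7\}$ in the minor case. So for a consonant triad to be contained in $\underline{\text{\it Hex}}$, both of the mod-$4$ residues that it occupies must lie in $\{2,3\}$.

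For a major triad I would impose $r \equiv 2$ or $3 \pmod{4}$ (from the pair $\{r, r+4\}$) and simultaneously $r+7 \equiv r+3 \equiv 2$ or $3 \pmod{4}$, that is, $r \equiv 3$ or $0 \pmod{4}$. The only common solution is $r \equiv 3 \pmod{4}$, giving $r \in \{3,7,11\}$ and the three major triads $E\flat, G, B$. The minor case is analogous: writing the pair differing by $4$ as $\{r+3,r+7\}$ and the isolated pitch as $r$, the two congruence conditions again force $r \equiv 3 \pmod{4}$, yielding $r \in \{3,7,11\}$ and the three minor triads $e\flat, g, b$. Together these six triads are exactly the elements of $\text{\it Hex}$.

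I do not expect any significant obstacle. The only thing to watch is bookkeeping of the ordered-triple conventions from Section~\ref{sec:Preliminaries} (the root sits in different coordinates for major versus minor chords), but since the statement concerns triads as subsets of $\underline{\text{\it Hex}}$, the argument reduces cleanly to the mod-$4$ count above. The proof is a finite check, and the mod-$4$ structure of the two augmented triads $A$ and $A'$ whose union is $\underline{\text{\it Hex}}$ organizes it into two very small cases.
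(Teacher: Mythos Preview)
Your argument is correct and takes a genuinely different route from the paper. The paper proceeds by direct enumeration: it lists the perfect fifths $\langle x, x+7\rangle$ available inside $\underline{\text{\it Hex}}$ (there are exactly three), and then checks in each case whether the missing third $x+4$ (respectively $x+3$) also lies in $\underline{\text{\it Hex}}$. You instead exploit the structural fact that $\underline{\text{\it Hex}}$ is the union of two cosets of $\{0,4,8\}$, and use a mod-$4$ residue calculation to pin down the root. Your approach is slightly more conceptual and would scale cleanly to similar questions about other unions of augmented triads; the paper's enumeration is more hands-on and requires no preliminary observation, which makes it marginally more self-contained for a reader just following the table. Both are short and either would serve.
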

\begin{proof}
We first identify the available perfect fifths in $\underline{\text{\it Hex}}$ (pairs with difference 7), and then check if the corresponding major/minor thirds are in $\underline{\text{\it Hex}}$.

The only pairs of the form $\langle x, x+7 \rangle$ are $\langle 3, 10 \rangle$, $\langle 7, 2 \rangle$, and $\langle 11, 6 \rangle$, and we see that $x+4$ is contained in $\underline{\text{\it Hex}}$ in each case, that is, 7, 11, and 3 are in $\underline{\text{\it Hex}}$. Thus we have the three major chords $E\flat$, $G$, and $B$, and no others.

The only pairs of the form $\langle x+7, x \rangle=\langle y, y+5 \rangle$ are $\langle 2, 7 \rangle$, $\langle 6, 11 \rangle$, and $\langle 10, 3 \rangle$, and we see that $y+8$ is contained in $\underline{\text{\it Hex}}$ in each case, that is, 10, 2, and 6 are in $\underline{\text{\it Hex}}$. Thus we have the three minor chords $g$, $b$, and $e\flat$, and no others.
\end{proof}

\begin{proposition} \label{prop:H-dihedral_of_order_6}
\hspace{4in}
\begin{enumerate}
\item
The only elements of the $T/I$-group that preserve $\text{\it Hex}$ as a set are $\{T_0, T_4, T_8, I_1, I_5, I_9\}$, so they form a group denoted $H$.
\item
$H:=\{T_0, T_4, T_8, I_1, I_5, I_9\}$ is dihedral of order 6.
\end{enumerate}
\end{proposition}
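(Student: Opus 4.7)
The plan is to reduce the statement about ordered consonant triads to a statement about the underlying subset $\underline{\text{\it Hex}}$ of $\mathbb{Z}_{12}$, and then perform an explicit arithmetic check. Throughout, I would regard $T_n$ and $I_n$ as bijections of $\mathbb{Z}_{12}$ acting componentwise on triads as in \eqref{equ:TI-action_on_triads}.

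First, I would establish the reduction: an element $g$ of the $T/I$-group preserves $\text{\it Hex}$ setwise if and only if it preserves $\underline{\text{\it Hex}}$ setwise. The forward direction is immediate, since the componentwise action takes the union of pitch classes of chords in $\text{\it Hex}$ to the union of pitch classes of chords in $g(\text{\it Hex})=\text{\it Hex}$, and this union is $\underline{\text{\it Hex}}$. For the converse, suppose $g(\underline{\text{\it Hex}})=\underline{\text{\it Hex}}$; because transpositions and inversions map consonant triads to consonant triads, for each $c\in\text{\it Hex}$ the image $g(c)$ is a consonant triad whose underlying pitch classes lie in $\underline{\text{\it Hex}}$, and Lemma~\ref{lem:only_chords_in_Hex} forces $g(c)\in\text{\it Hex}$.

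Second, I would identify the stabilizer of $\underline{\text{\it Hex}}$ inside the $T/I$-group by direct calculation. The key observation is the description $\underline{\text{\it Hex}}=\{x\in\mathbb{Z}_{12}\mid x\equiv 2\text{ or }3\pmod{4}\}$, equivalently the disjoint union of the two augmented triads $\{2,6,10\}$ and $\{3,7,11\}$. From this, $T_n(\underline{\text{\it Hex}})=\underline{\text{\it Hex}}$ forces $n\equiv 0\pmod{4}$, giving $\{T_0,T_4,T_8\}$, while $I_n(\underline{\text{\it Hex}})=\underline{\text{\it Hex}}$ forces $n\equiv 1\pmod{4}$, giving $\{I_1,I_5,I_9\}$. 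Combined with the reduction, this proves (1) and in particular shows that $H$ has six elements (and is therefore closed under composition, hence a subgroup).

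Finally, for (2), the dihedral structure will follow from the standard identities $T_4^{3}=T_0$, $I_1^{2}=T_0$, and $I_1 T_4 I_1 = T_{-4}=T_8=T_4^{-1}$, verified by one-line computations in $\mathbb{Z}_{12}$. These exhibit $H$ as generated by an order-3 element and an order-2 element satisfying the dihedral relation, so $H$ is isomorphic to the dihedral group of order 6. I expect the only conceptually subtle point of the argument to be the converse direction of the reduction step, which depends essentially on Lemma~\ref{lem:only_chords_in_Hex}; the remainder is routine arithmetic modulo 12.
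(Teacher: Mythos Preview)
Your proposal is correct and follows the same overall architecture as the paper: reduce the question about $\text{\it Hex}$ to the question about $\underline{\text{\it Hex}}$ via Lemma~\ref{lem:only_chords_in_Hex}, compute the set-wise stabilizer of $\underline{\text{\it Hex}}$ inside the $T/I$-group, and then identify the resulting six-element group as dihedral.

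The execution differs in two places. For the stabilizer computation, the paper inspects a picture of $\underline{\text{\it Hex}}$ on the $12$-gon and reads off the admissible rotations and reflections geometrically, whereas you use the neat arithmetic characterization $\underline{\text{\it Hex}}=\{x:x\equiv 2\text{ or }3\pmod 4\}$ to reduce everything to congruences. Your route is figure-free and arguably more robust; the paper's route is more visual. For part~(2), the paper observes that $H$ is non-commutative (via $T_4I_1\neq I_1T_4$) and invokes the classification of groups of order~$6$, while you exhibit the dihedral presentation directly through $T_4^3=T_0$, $I_1^2=T_0$, and $I_1T_4I_1=T_4^{-1}$. Both are standard; yours is slightly more constructive, the paper's slightly quicker. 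You also state the biconditional in the reduction step more explicitly than the paper does, which is a small expository improvement.
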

\begin{proof}
\begin{enumerate}
\item
If an element of the $T/I$-group preserves $\text{\it Hex}$ as a set, than it must also preserve the collection $\underline{\text{\it Hex}}$ of underlying pitch classes as a set. Geometric inspection of the plot of $\underline{\text{\it Hex}}$ in Figure~\ref{fig:reflections} reveals that the only rotations that preserve $\underline{\text{\it Hex}}$ are $T_0$, $T_4$, and $T_8$. \begin{figure}[h]
  \centering
  \includegraphics[width=2.5in]{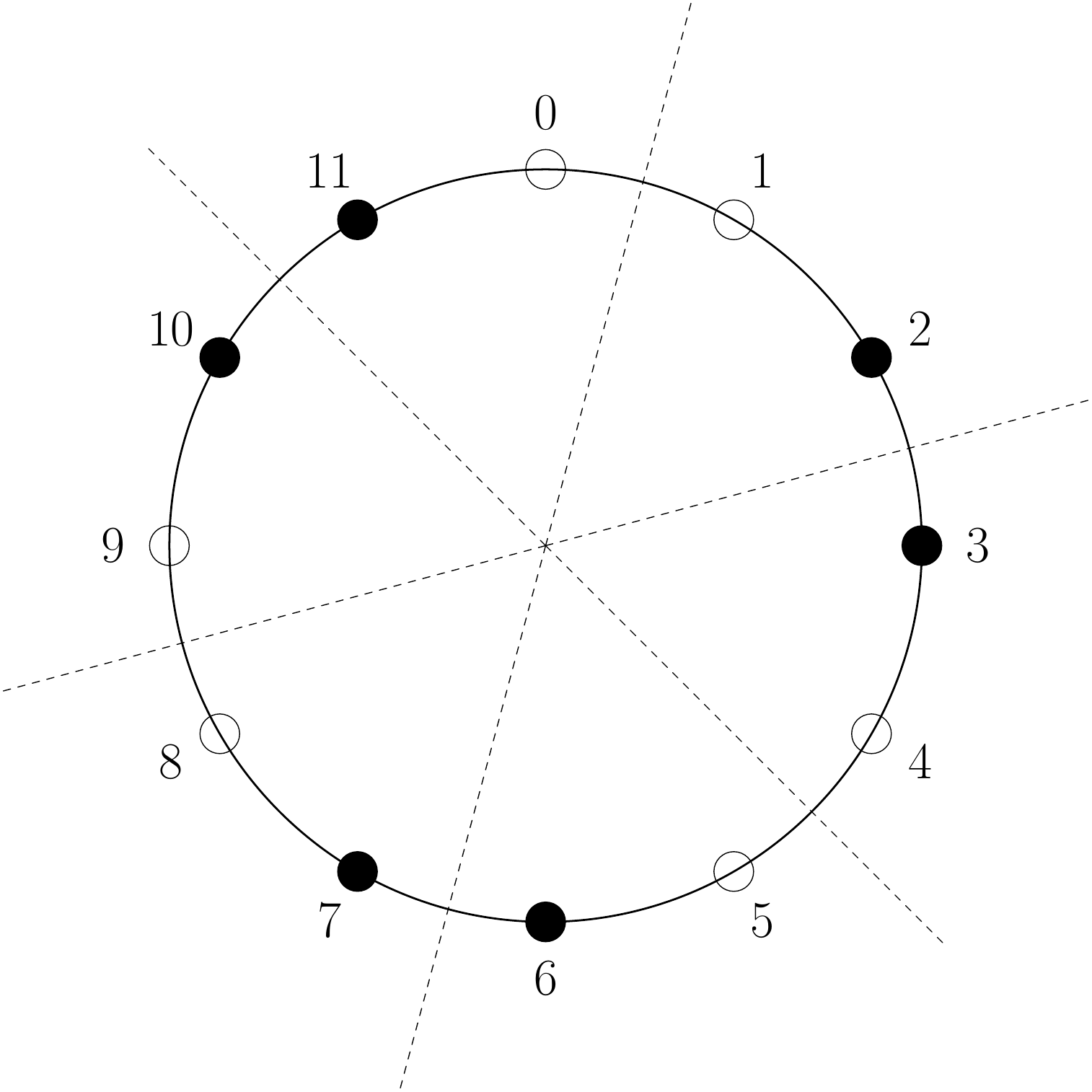}\\
  \caption{The solid circles represent the subset $\underline{\text{\it Hex}}$ of $\mathbb{Z}_{12}$. The symmetry of the subset makes apparent that the only rotations which preserve $\underline{\text{\it Hex}}$ are $T_0$, $T_4$, and $T_8$. The geometric locations of the solid circles also imply that the reflections across the dashed lines are the only reflections which preserve $\underline{\text{\it Hex}}$.}\label{fig:reflections}
\end{figure}

Again looking at Figure~\ref{fig:reflections}, we see that the three reflections which interchange $2\leftrightarrow 3$, or $6 \leftrightarrow  7$, or $10 \leftrightarrow 11$ preserve $\underline{\text{\it Hex}}$. By a comment in Section~\ref{subsec:TI-PL}, these are
$$I_{2+3}=I_5 \hspace{1in} I_{6+7}=I_1 \hspace{1in} I_{10+11}=I_9.$$
No other reflections preserve $\underline{\text{\it Hex}}$ as we can see geometrically from the limited reflection symmetry of $\underline{\text{\it Hex}}$.

Since $H:=\{T_0, T_4, T_8, I_1, I_5, I_9\}$ is a set-wise stabilizer of $\underline{\text{\it Hex}}$, it is a group.

From Lemma~\ref{lem:only_chords_in_Hex} we see that $\{T_0, T_4, T_8, I_1, I_5, I_9\}$ must also stabilize the chord collection $\text{\it Hex}$ as a set. No other transpositions or inversions stabilize $\text{\it Hex}$ by the argument at the outset of this proof.
\item
The only non-commutative group of order 6 is the symmetric group on three elements, denoted $\text{\rm Sym}(3)$, which is isomorphic to the dihedral group of order 6. The group under consideration is non-commutative, because $T_4 I_1(x)=-x+5$ while $I_1 T_4(x)=-x-3$.
\end{enumerate}
\end{proof}

\begin{proposition} \label{prop:H_simply_transitive}
The set-wise stabilizer $H$ acts simply transitively on $\text{\it Hex}$.
\end{proposition}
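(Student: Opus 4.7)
The plan is to invoke part~\ref{prop:simple_transitivity_iff_each_stabilizer_trivial:ii} of Proposition~\ref{prop:simple_transitivity_iff_each_stabilizer_trivial}, which says that for a finite group $G$ acting on $S$, any two of the conditions \ref{a} transitive, \ref{b} trivial stabilizers, \ref{c} $|G|=|S|$ suffice for simple transitivity. Here condition~\ref{c} comes for free: Proposition~\ref{prop:H-dihedral_of_order_6} gives $|H|=6$, and clearly $|\text{\it Hex}|=6$. So we only need one more of the conditions, and transitivity is the most direct to check.

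To verify condition~\ref{a}, I would compute the $H$-orbit of the single chord $E\flat=\langle 3,7,10\rangle$ by evaluating each of the six elements of $H$ on it, using equation~\eqref{equ:TI-action_on_triads}. The three transpositions yield
\[
T_0(E\flat)=E\flat,\qquad T_4(E\flat)=\langle 7,11,2\rangle=G,\qquad T_8(E\flat)=\langle 11,3,6\rangle=B,
\]
and the three inversions yield
\[
I_1(E\flat)=\langle 10,6,3\rangle=e\flat,\qquad I_5(E\flat)=\langle 2,10,7\rangle=g,\qquad I_9(E\flat)=\langle 6,2,11\rangle=b.
\]
All six elements of $\text{\it Hex}$ appear, so the orbit of $E\flat$ is all of $\text{\it Hex}$, establishing transitivity. (We should of course first remark that because $H$ was defined in Proposition~\ref{prop:H-dihedral_of_order_6} as the set-wise stabilizer of $\text{\it Hex}$, the action on $\text{\it Hex}$ is well-defined, so this restricted action is a genuine group action.)

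Combining transitivity with $|H|=|\text{\it Hex}|$ and invoking Proposition~\ref{prop:simple_transitivity_iff_each_stabilizer_trivial}~\ref{prop:simple_transitivity_iff_each_stabilizer_trivial:ii} (case \ref{a}\ref{c}$\Rightarrow$) gives the simple transitivity of $H$ on $\text{\it Hex}$. There is really no obstacle here; the only subtlety is conceptual, namely exploiting the cardinality coincidence to reduce a simple-transitivity claim to the verification of a single orbit computation, rather than checking the stronger requirement that for each ordered pair $(Y,Z)$ there is a unique $h\in H$ with $hY=Z$. As a sanity check, one also notes that no inversion can fix any consonant triad (since inversion swaps major and minor chords under the chosen orderings) and only $T_0$ among transpositions is trivial, which independently verifies condition~\ref{b}.
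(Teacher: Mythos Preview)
Your proposal is correct and matches the paper's proof essentially line for line: the paper also computes the $H$-orbit of $E\flat$ explicitly (displayed as a star-shaped diagram yielding the same six chords you list), observes $|H|=6=|\text{\it Hex}|$, and invokes the Orbit--Stabilizer Theorem together with Proposition~\ref{prop:simple_transitivity_iff_each_stabilizer_trivial}~\ref{prop:simple_transitivity_iff_each_stabilizer_trivial:ii}. The only cosmetic difference is that the paper phrases the cardinality step as ``$|H|/|H_Y|=|\text{orbit}|$ implies $|H_Y|=1$'' rather than directly citing case \ref{a}\ref{c}$\Rightarrow$, but this is the same inference.
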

\begin{proof}
The $H$-orbit of $E\flat$ is all of $\text{\it Hex}$, as the following computation shows.
$$
\xymatrix@C=2.5pc@R=2.5pc{*=<1.7pc>[o][F-]{E\flat} &  & *=<1.7pc>[o][F-]{e\flat} \\ *=<1.7pc>[o][F-]{G} & *=<1.7pc>[o][F-]{E\flat} \ar[ul]_{T_0} \ar[l]_{T_4} \ar[dl]^{T_8} \ar[ur]^{I_1} \ar[r]^{I_5} \ar[dr]_{I_9} & *=<1.7pc>[o][F-]{g} \\ *=<1.7pc>[o][F-]{B} &  & *=<1.7pc>[o][F-]{b} }$$
We have $\vert H\vert =6 = \vert {\text{\rm orbit of } \ Y}\vert $
so the Orbit-Stabilizer Theorem $$\left\vert{H}\right\vert/\left\vert{H_Y}\right\vert=\left\vert{\text{\rm orbit of } \ Y}\right\vert$$
implies $\left\vert{H_Y}\right\vert=1$. See Proposition~\ref{prop:simple_transitivity_iff_each_stabilizer_trivial}~\ref{prop:simple_transitivity_iff_each_stabilizer_trivial:ii}.
\end{proof}

Next we can investigate the subgroup of $\text{\rm Sym}(\text{\it Triads})$ generated by $P$ and $L$, which is called the {\it $PL$-group}.

\begin{proposition} \label{prop:PL-dihedral_of_order_6}
The subgroup $\langle P, L\rangle$ of $\text{\rm Sym}(\text{\it Triads})$ is dihedral of order 6.
\end{proposition}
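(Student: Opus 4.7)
The plan is to exhibit the dihedral presentation directly and then pin down the order. First, I would verify that $P$ and $L$ are involutions in $\text{\rm Sym}(\text{\it Triads})$. Applying \eqref{equ:PL-action_on_triads} to a triad $\langle x_1,x_2,x_3 \rangle$ produces a new triad whose first and third entries still sum to $x_1+x_3$, so a second application of $P$ uses the same inversion $I_{x_1+x_3}$ and hence undoes the first; the analogous observation about $x_2+x_3$ gives $L^2 = \text{Id}$. Consequently $\langle P,L\rangle$ is a homomorphic image of the infinite dihedral group.

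Next, I would compute the composite $LP$ explicitly on each type of triad using \eqref{equ:PL-action_on_triads}, obtaining
\begin{equation*}
LP\langle x, x+4, x+7\rangle = \langle x-4, x, x+3\rangle,\qquad LP\langle x+7, x+3, x\rangle = \langle x+11, x+7, x+4\rangle.
\end{equation*}
Thus $LP$ preserves the major/minor distinction, shifting the root by $-4$ on major triads and by $+4$ on minor triads. Iterating three times gives a net root shift of $\pm 12 \equiv 0$, so $(LP)^3 = \text{Id}_{\text{\it Triads}}$. Together with $P^2 = L^2 = 1$, this presents $\langle P,L\rangle$ as a quotient of the dihedral group of order 6, so its order divides 6.

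To see the order is exactly 6, I would apply the six words $\text{Id}, P, LP, PLP, LPLP, PLPLP$ to $E\flat$ and read off the hexatonic cycle \eqref{equ:PL_network_of_hexatonic_cycle}: the images are the six distinct triads $E\flat, e\flat, B, b, G, g$, so these six words represent pairwise distinct elements of $\langle P,L\rangle$. Combining the two bounds yields $|\langle P,L\rangle| = 6$, and the presentation then forces the group to be dihedral of order 6.

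The only step requiring real care is the verification that $(LP)^3 = \text{Id}_{\text{\it Triads}}$: since $LP$ acts on major and minor triads by opposite shifts, the iteration must be tracked on each type separately. In both cases, however, the threefold composition contributes a cancelling shift of 12, so the computation is routine rather than genuinely obstructive.
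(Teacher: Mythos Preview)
Your proof is correct and follows essentially the same architecture as the paper's: show $P$ and $L$ are involutions, establish $(LP)^3=\text{Id}_{\text{\it Triads}}$, observe that the six alternating words are distinguished by their values at $E\flat$ via the hexatonic cycle, and conclude that $\langle P,L\rangle$ realizes the dihedral presentation of order~6.

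The one genuine difference lies in how $(LP)^3=\text{Id}_{\text{\it Triads}}$ is verified. The paper checks this by tracing $LP$ around each of the four hexatonic cycles \eqref{equ:Hex1}--\eqref{equ:Hex4} separately (or, alternatively, by invoking Proposition~\ref{prop:PL_TI_commute} together with the fact that $\text{\it Triads}$ is a single $T/I$-orbit). You instead compute $LP$ once and for all from the formulas in \eqref{equ:PL-action_on_triads}, obtaining a root shift of $\mp 4$ on major/minor triads, from which $(LP)^3=\text{Id}$ follows immediately by arithmetic in $\mathbb{Z}_{12}$. Your route is more economical here---it handles all $24$ triads uniformly without partitioning into hexatonic cycles---while the paper's route has the virtue of keeping the hexatonic cycles in view throughout, which is thematically appropriate given the paper's aims.
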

\begin{proof}
We first observe that $P$ and $L$ are involutions, that is, $P^2=\text{Id}_{\text{\it Triads}}$ and $L^2=\text{Id}_{\text{\it Triads}}$.  A musical justification is the definition of ``parallel'' and ``leading tone exchange.'' A mathematical justification is a direct computation of $P^2$ and $L^2$ using the formulas in \eqref{equ:PL-action_on_triads}.

Since $P$ and $L$ are involutions, every nontrivial element of $\langle P, L\rangle$ can be expressed as an alternating word in the letters $P$ and $L$. The six functions $\text{Id}_{\text{\it Triads}}$, $P$, $LP$, $PLP$, $LPLP$, and $PLPLP$ are all distinct by evaluating at $E\flat$ using the following diagram from the Introduction.
\begin{equation} \label{equ:Eflat-PL-cycle-for-proof}
\entrymodifiers={=<1.5pc>[o][F-]}
\xymatrix@C=3.5pc@R=3.5pc{E\flat \ar[r]^{P}  &  e\flat \ar[r]^L & B \ar[r]^P & b \ar[r]^L & G \ar[r]^P & g \ar@/^4pc/[lllll]^L }
\end{equation}
From diagram \eqref{equ:Eflat-PL-cycle-for-proof} we also see that $(LP)^3(E\flat)=E\flat$, and for any $Y\in\{E\flat,B,G\}$, $(LP)^3(Y)=Y$. Similarly, by reading the diagram backwards (recall $P$ and $L$ are involutions), we see $(LP)^3(Y)=Y$ for any minor triad $Y \in \{e\flat,b,g\}$.
We have similar $PL$-diagrams and considerations for the cycles in \eqref{equ:Hex2}, \eqref{equ:Hex3}, and \eqref{equ:Hex4}, and therefore $(LP)^3=\text{Id}_{\text{\it Triads}}$ on the entire set $\text{\it Triads}$ of consonant triads. Another way to see that $(LP)^3=\text{Id}_{\text{\it Triads}}$ is to combine the observation $(LP)^3(E\flat)=E\flat$ from diagram \eqref{equ:Eflat-PL-cycle-for-proof} with Proposition~\ref{prop:PL_TI_commute} and the fact that $\text{\it Triads}$ is the $T/I$-orbit of $E\flat$.

We next show via a word-theoretic argument that $\langle P, L\rangle$ consists only of the six functions $\text{Id}_{\text{\it Triads}}$, $P$, $LP$, $PLP$, $LPLP$, and $PLPLP$ discussed above. From $(LP)^3=\text{Id}_{\text{\it Triads}}$, we express $PL$ in terms of $LP$. Namely,
$$\aligned
(LP)^3 &= \text{Id}_{\text{\it Triads}} \\
(LP)^3(PL) &= (PL) \\
(LP)^2 &= PL.
\endaligned$$
Consider any alternating word in $P$ and $L$. If the right-most letter is $P$, then we can use $(LP)^3=\text{Id}_{\text{\it Triads}}$ to achieve an equality with one of the six functions we already have. If the right-most letter is $L$, then we replace each $PL$ by $(LP)^2$ and use $L^2=\text{Id}_{\text{\it Triads}}$ if $LL$ results on the far left. Then we have an equal function with right-most letter $P$, which we can then reduce to one of the six above using $(LP)^3=\text{Id}_{\text{\it Triads}}$ as we did in the first case of right-most letter $P$. Thus
$\langle P, L\rangle=\{\text{Id}_\text{\it Triads},\; P,\; LP,\; PLP,\; LPLP,\; PLPLP\}$.

This group is non-commutative, as $PL \neq LP$, hence it is isomorphic to $\text{Sym}(3)$, the only non-commutative group of order 6. But $\text{Sym}(3)$ is dihedral of order 6.

Instead of the previous paragraph, we can show $\langle P, L\rangle$ is dihedral of order 6 using a presentation.  Let $t:=L$ and $s:=LP$, then  $s^3=e$, $t^2=e$, and $tst=s^{-1}$. The dihedral group of order 6 is the largest group with elements $s$ and $t$ such that $s^3=e$, $t^2=e$, and $tst=s^{-1}$. But we observed from diagram \eqref{equ:Eflat-PL-cycle-for-proof} that $\langle P, L \rangle$ has at least six distinct elements. Hence, $\langle P, L \rangle$ is dihedral of order 6.
\end{proof}

\begin{proposition} \label{prop:PL_simply_transitive}
The $PL$-group $\langle P, L\rangle$ acts simply transitively on $\text{\it Hex}$.
\end{proposition}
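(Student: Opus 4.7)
The plan is to mirror the proof strategy used for Proposition~\ref{prop:H_simply_transitive}, namely to verify transitivity on $\text{\it Hex}$ by exhibiting an orbit, and then invoke Proposition~\ref{prop:simple_transitivity_iff_each_stabilizer_trivial}~\ref{prop:simple_transitivity_iff_each_stabilizer_trivial:ii} together with the fact that $|\langle P,L\rangle|=6=|\text{\it Hex}|$ to upgrade transitivity to simple transitivity.

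First I would observe that $\langle P,L\rangle$ really does act on $\text{\it Hex}$, i.e. that $\text{\it Hex}$ is preserved setwise by both generators. This is read directly off of diagram \eqref{equ:Eflat-PL-cycle-for-proof}: the map $P$ swaps $E\flat \leftrightarrow e\flat$, $B \leftrightarrow b$, and $G \leftrightarrow g$, while $L$ swaps $e\flat \leftrightarrow B$, $b \leftrightarrow G$, and $g \leftrightarrow E\flat$. So every element of $\langle P,L\rangle$ restricts to a bijection $\text{\it Hex}\to\text{\it Hex}$, and the restriction gives a well-defined action.

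Next I would establish transitivity by exhibiting the orbit of $E\flat$. Reading diagram \eqref{equ:Eflat-PL-cycle-for-proof} from left to right, the six group elements $\text{Id}_{\text{\it Triads}},\,P,\,LP,\,PLP,\,LPLP,\,PLPLP$ send $E\flat$ to $E\flat,\,e\flat,\,B,\,b,\,G,\,g$, respectively, which is all of $\text{\it Hex}$. Hence the orbit of $E\flat$ equals $\text{\it Hex}$, i.e. condition \ref{a} of Proposition~\ref{prop:simple_transitivity_iff_each_stabilizer_trivial}~\ref{prop:simple_transitivity_iff_each_stabilizer_trivial:ii} holds.

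Finally, by Proposition~\ref{prop:PL-dihedral_of_order_6} we have $|\langle P,L\rangle|=6=|\text{\it Hex}|$, which is condition \ref{c}. Since conditions \ref{a} and \ref{c} hold, Proposition~\ref{prop:simple_transitivity_iff_each_stabilizer_trivial}~\ref{prop:simple_transitivity_iff_each_stabilizer_trivial:ii} yields simple transitivity. Equivalently, the Orbit-Stabilizer Theorem applied to $E\flat$ gives $|\langle P,L\rangle_{E\flat}|=6/6=1$, so every stabilizer is trivial, and combined with transitivity this produces simple transitivity via part~\ref{prop:simple_transitivity_iff_each_stabilizer_trivial:i} of the same proposition. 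There is no real obstacle here: the proof is essentially a direct copy of Proposition~\ref{prop:H_simply_transitive}, the only subtlety being to point out that diagram \eqref{equ:Eflat-PL-cycle-for-proof} simultaneously certifies that the generators preserve $\text{\it Hex}$ and that the $\langle P,L\rangle$-orbit of $E\flat$ exhausts $\text{\it Hex}$.
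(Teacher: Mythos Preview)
Your proposal is correct and follows essentially the same approach as the paper: read transitivity off of diagram~\eqref{equ:Eflat-PL-cycle-for-proof}, note $|\langle P,L\rangle|=6=|\text{\it Hex}|$ from Proposition~\ref{prop:PL-dihedral_of_order_6}, and invoke Proposition~\ref{prop:simple_transitivity_iff_each_stabilizer_trivial}~\ref{prop:simple_transitivity_iff_each_stabilizer_trivial:ii} (equivalently the Orbit-Stabilizer Theorem) to conclude simple transitivity. The paper's version is more terse, omitting your explicit check that the generators preserve $\text{\it Hex}$ setwise, but the argument is the same.
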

\begin{proof}
From diagram \eqref{equ:Eflat-PL-cycle-for-proof} we see that $\langle P, L\rangle$ acts transitively on $\text{\it Hex}$. Since $\langle P, L\rangle$ and $\text{\it Hex}$ have the same cardinality, the Orbit-Stabilizer Theorem implies that every stabilizer must be trivial. See Proposition~\ref{prop:simple_transitivity_iff_each_stabilizer_trivial}~\ref{prop:simple_transitivity_iff_each_stabilizer_trivial:ii}.
\end{proof}

\begin{lemma} \label{lem:restriction_gives_iso_and_simple_transitivity}
Let $S$ be a set and suppose $G \leq \text{\rm Sym}(S)$. Suppose $G$ acts simply transitively on an orbit $\overline{S}$, and $\overline{G}$ is the restriction of $G$ to the orbit $\overline{S}$. Then the restriction homomorphism $G \to \overline{G}$ is an isomorphism, and $\overline{G}$ also acts simply transitively.
\end{lemma}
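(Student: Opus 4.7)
The plan is to first show that the restriction map $\rho\co G \to \overline{G}$ defined by $\rho(g)=g|_{\overline{S}}$ is a group isomorphism, and then derive simple transitivity of $\overline{G}$ on $\overline{S}$ as a direct consequence.

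For the isomorphism, I would note first that $\rho$ is well-defined because $\overline{S}$, being an orbit, is $G$-invariant, so each $g \in G$ restricts to a bijection of $\overline{S}$. It is a homomorphism by construction and surjective by the definition of $\overline{G}$. The only nontrivial point is injectivity, which reduces to computing the kernel. If $\rho(g)=\text{Id}_{\overline{S}}$, then $g$ fixes every element of $\overline{S}$; in particular, fixing any single $Y \in \overline{S}$ places $g$ in the stabilizer $G_Y$. By the assumed simple transitivity of $G$ on $\overline{S}$, together with Proposition~\ref{prop:simple_transitivity_iff_each_stabilizer_trivial}~\ref{prop:simple_transitivity_iff_each_stabilizer_trivial:i}, the stabilizer $G_Y$ is trivial, so $g$ equals the identity of $G$. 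Thus $\ker \rho = \{e\}$ and $\rho$ is an isomorphism.

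For the second assertion, transitivity of $\overline{G}$ on $\overline{S}$ is inherited immediately from transitivity of $G$ on $\overline{S}$: given $Y, Z \in \overline{S}$, any $g \in G$ with $gY=Z$ yields $\rho(g)(Y)=Z$. For uniqueness, if $\rho(g_1)(Y)=\rho(g_2)(Y)$, then $g_1 Y = g_2 Y$ in $S$, and simple transitivity of $G$ gives $g_1=g_2$, hence $\rho(g_1)=\rho(g_2)$. So $\overline{G}$ acts simply transitively.

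The only place where care is genuinely needed is the injectivity step: it is essential to use \emph{simple} transitivity of $G$ on $\overline{S}$ (not merely transitivity) in order to conclude from $g \in G_Y$ that $g$ is the identity. Beyond that, the argument is a straightforward unpacking of definitions.
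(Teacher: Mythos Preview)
Your proof is correct and follows essentially the same route as the paper's: both compute the kernel of the restriction homomorphism via the triviality of stabilizers under simple transitivity, and both verify transitivity and uniqueness for $\overline{G}$ by lifting to $G$ and invoking simple transitivity there. The only cosmetic difference is that you explicitly cite Proposition~\ref{prop:simple_transitivity_iff_each_stabilizer_trivial}~\ref{prop:simple_transitivity_iff_each_stabilizer_trivial:i} for the trivial-stabilizer step, whereas the paper appeals to simple transitivity directly.
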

\begin{proof}
Suppose $g\in G$ has restriction $\overline{g}$ with $\overline{g} \,\overline{s}=\overline{s}$ for all $\overline{s} \in \overline{S}$. Then unrestricted $g$ also has $g\overline{s}=\overline{s}$ for all $\overline{s} \in \overline{S}$, so $g=\text{Id}_S$ by simple transitivity, and the kernel of the surjective homomorphism $G \to \overline{G}$ is trivial. The transitivity of $\overline{G}$ is clear: for any $\overline{s},\overline{t} \in \overline{S}$ there exists $g \in G$ such that $g\overline{s}=\overline{t}$, so also $\overline{g}\hspace{.4mm} \overline{s}=\overline{t}$ with $\overline{g}\in \overline{G}$. The uniqueness of $\overline{g} \in \overline{G}$ is also clear: if $\overline{h} \in \overline{G}$ also satisfies $\overline{h}\hspace{.4mm} \overline{s}=\overline{t}$, then so do the unrestricted $g$ and $h$, so $g=h$ by the simple transitivity of $G$ acting on $\overline{S}$, so $\overline{g}=\overline{h}$.
\end{proof}

\begin{theorem}[Hexatonic Duality] \label{thm:hexatonic_duality}
The restrictions of the $PL$-group and $\{T_0, T_4, T_8, I_1, I_5, I_9\}$ to $\text{\it Hex}$ are dual groups in $\text{\rm Sym}(\text{\it Hex})$, and both are dihedral of order 6.
\end{theorem}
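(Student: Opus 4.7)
The plan is to assemble the duality from pieces already in hand. Propositions~\ref{prop:H_simply_transitive} and~\ref{prop:PL_simply_transitive} give simple transitivity of both $H$ and $\langle P, L\rangle$ acting on $\text{\it Hex}$; Proposition~\ref{prop:PL_TI_commute} gives that $\langle P, L\rangle$ commutes with the $T/I$-group; and the two main vehicles for putting this together are Lemma~\ref{lem:restriction_gives_iso_and_simple_transitivity}, which transfers simple transitivity and the abstract group structure to the restrictions in $\text{\rm Sym}(\text{\it Hex})$, and Proposition~\ref{prop:Commutativity_is_Enough_for_Dual_Groups}, which upgrades plain commutativity to Lewin duality once one group acts simply transitively and the other acts transitively.

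The first step is to observe that $\text{\it Hex}$ is a $\langle P, L\rangle$-orbit in $\text{\it Triads}$ (diagram~\eqref{equ:Eflat-PL-cycle-for-proof} displays it as the orbit of $E\flat$) and is also an $H$-orbit (the diagram in the proof of Proposition~\ref{prop:H_simply_transitive} displays it likewise as the orbit of $E\flat$). Lemma~\ref{lem:restriction_gives_iso_and_simple_transitivity} therefore applies to both actions: the restriction homomorphisms $\langle P, L\rangle \to \overline{\langle P, L\rangle}$ and $H \to \overline{H}$ are isomorphisms onto their images in $\text{\rm Sym}(\text{\it Hex})$, and the restricted groups still act simply transitively on $\text{\it Hex}$. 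Combined with Propositions~\ref{prop:PL-dihedral_of_order_6} and~\ref{prop:H-dihedral_of_order_6}, this already yields the second conclusion of the theorem: both restricted groups are dihedral of order 6.

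For the duality itself, Proposition~\ref{prop:PL_TI_commute} states that every element of $\langle P, L\rangle$ commutes in $\text{\rm Sym}(\text{\it Triads})$ with every element of the $T/I$-group, and in particular with every element of $H$. Since both $\langle P, L\rangle$ and $H$ preserve $\text{\it Hex}$ set-wise, this commutativity descends under the restriction homomorphism to commutativity in $\text{\rm Sym}(\text{\it Hex})$ between $\overline{\langle P, L\rangle}$ and $\overline{H}$. All hypotheses of Proposition~\ref{prop:Commutativity_is_Enough_for_Dual_Groups} are now in place, with the simply transitive role played by $\overline{\langle P, L\rangle}$ and the transitive role played by $\overline{H}$; its conclusion is precisely that the two restricted groups are dual in the sense of Lewin in $\text{\rm Sym}(\text{\it Hex})$. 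The only point that calls for any care is ensuring that the restriction step is legitimate, i.e.\ that $\text{\it Hex}$ is genuinely an orbit under each group so that the restriction homomorphism lands in $\text{\rm Sym}(\text{\it Hex})$; this mild obstacle is dispatched by the orbit observation made at the start of the second paragraph.
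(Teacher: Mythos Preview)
Your proof is correct and follows essentially the same approach as the paper: invoke Propositions~\ref{prop:H-dihedral_of_order_6}, \ref{prop:H_simply_transitive}, \ref{prop:PL-dihedral_of_order_6}, \ref{prop:PL_simply_transitive}, and Lemma~\ref{lem:restriction_gives_iso_and_simple_transitivity} to get that the restrictions are dihedral of order~6 and act simply transitively, then use Proposition~\ref{prop:PL_TI_commute} for commutativity and conclude duality via Proposition~\ref{prop:Commutativity_is_Enough_for_Dual_Groups}. If anything, you are slightly more explicit than the paper in verifying that $\text{\it Hex}$ is an orbit under each group (so that Lemma~\ref{lem:restriction_gives_iso_and_simple_transitivity} applies) and that commutativity descends to the restrictions.
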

\begin{proof}
Let $\overline{G}$ be the restriction of the $PL$-group to $\text{\it Hex}$, and let $\overline{H}$ be the restriction of
$H=\{T_0, T_4, T_8, I_1, I_5, I_9\}$ to $\text{\it Hex}$.

We already know that $\overline{G}$ and $\overline{H}$ are dihedral of order 6 by Propositions~\ref{prop:H-dihedral_of_order_6} and \ref{prop:PL-dihedral_of_order_6} and Lemma~\ref{lem:restriction_gives_iso_and_simple_transitivity}.

We also already know that $\overline{G}$ and $\overline{H}$ each act simply transitively on $\text{\it Hex}$ by Propositions~\ref{prop:H_simply_transitive} and \ref{prop:PL-dihedral_of_order_6} and Lemma~\ref{lem:restriction_gives_iso_and_simple_transitivity}. We even already know that the groups $\overline{G}$ and $\overline{H}$ commute by Proposition~\ref{prop:PL_TI_commute}. Finally, Proposition~\ref{prop:Commutativity_is_Enough_for_Dual_Groups} guarantees that $\overline{G}$ and $\overline{H}$ centralize one another.
\end{proof}

\begin{rmk}[Comparison with the Proof Strategy of Crans--Fiore--Satyendra in \cite{cransfioresatyendra}] \label{rem:differences_and_similarities}
There are several differences between the proof strategy of Hexatonic Duality in present Theorem~\ref{thm:hexatonic_duality} and the proof strategy of $T/I$-$PLR$ duality in Theorem~6.1 of \cite{cransfioresatyendra}. In the present paper, we first proved that the concerned groups act simply transitively, and determined their structure, and only then showed that the groups exactly centralize each other. In \cite{cransfioresatyendra}, on the other hand, the determination of the size of the $PLR$-group was postponed until after the centralizer $C(T/I)$ was seen to act simply, i.e. that each stabilizer $C(T/I)_Y$ is trivial. Then, from these trivial stabilizers, the Orbit-Stabilizer Theorem, the earlier observation that $24 \leq |PLR\text{-group}|$, and the consequence
$$24 \leq |PLR\text{-group}| \leq |C(T/I)| \leq |\text{orbit of $Y$}| \leq 24$$
on page 492, the authors of \cite{cransfioresatyendra} simultaneously conclude that the $PLR$-group has 24 elements and is the centralizer of $T/I$.

A slight simplification of the aforementioned inequality would be an argument like the one in the present paper: observe that the $PLR$-group acts transitively on the 24 consonant triads because of the Cohn $LR$-sequence (recalled on page 487 of \cite{cransfioresatyendra}), then $C(T/I)$ must act transitively as it contains the $PLR$-group, and then the Orbit-Stabilizer Theorem and the trivial stabilizers imply that $|C(T/I)|$ must be 24, so the $PLR$-group also has 24 elements. Also, instead of postponing the proof that the $PLR$-group has exactly 24 elements from Theorem~5.1 of \cite{cransfioresatyendra} until the aforementioned inequality in Theorem~6.1, one could do a word-theoretic argument in Theorem~5.1 to see that the $PLR$-group has exactly 24 elements, similar to the present argument in Proposition~\ref{prop:PL-dihedral_of_order_6}.
\end{rmk}

\begin{rmk}
For an explicit computation of the four hexatonic cycles as orbits of the $PL$-group, see Oshita \cite{oshita}, which was also an undergraduate research project with the second author of the present article. The preprint \cite{oshita} includes a sketch that $\langle P,L \rangle\cong \text{Sym}(3)$.
\end{rmk}

\begin{rmk}[Alternative Derivation using the Sub Dual Group Theorem]
Hexatonic Duality Theorem~\ref{thm:hexatonic_duality} can also be proved using the Sub Dual Group Theorem~3.1 of Fiore--Noll in \cite{fiorenoll2011}, {\it if one assumes already} the duality of the $T/I$-group and $PLR$-group (maximal smoothness is not discussed in \cite{fiorenoll2011}). Fiore--Noll apply the Sub Dual Group Theorem to the construction of dual groups on the hexatonic cycles in \cite[Section~3.1]{fiorenoll2011}. The method is to select $G_0$ to be the $PL$-group, select $s_0=E\flat$, and compute $S_0:=G_0s_0=\text{\it Hex}$, and then the dual group will consist of the restriction of those elements of the $T/I$-group that map $E\flat$ into $S_0$.

Notice that in the present paper, on the other hand, we {\it first} determined which transpositions and inversions preserve $\text{\it Hex}$ in Proposition~\ref{prop:H-dihedral_of_order_6}, and then proved duality, whereas the application of the Sub Dual Group Theorem of Fiore--Noll starts with the $PL$-group and determines from it the dual group as (the restrictions of) those elements of the $T/I$-group that map $E\flat$ into $S_0$. Notice also, in the present paper we determined that the $PL$-group and its dual $H$ are dihedral of order 6, but that the Sub Dual Group Theorem of Fiore--Noll does not specify which group structure is present. In any case, Clampitt explicitly wrote down all 6 elements of each group in permutation cycle notation in \cite{clampittParsifal}.

The present paper is complementary to the work \cite{fiorenoll2011} of Fiore--Noll in that we work very closely with the specific details of the groups and sets involved to determine one pair of dual groups in an illustrative way, rather than appealing to a computationally and conceptually convenient theorem. Fiore--Noll however also use their Corollary~3.3 to compute the other hexatonic duals via conjugation, as summarized in Table~\ref{table:SubDualGroupTheorem_Application}.

\begin{table}[h]
  \centering
  $\begin{tabular}{|c|c|c|}
\hline $k$ & $k\text{\it Hex}$ & $kHk^{-1}=$ dual group to $PL$-group on $k\text{\it Hex}$\\ \hline
$\text{Id}_{\text{\it Triads}}$ & $\{E\flat,e\flat,B,b,G,g\}$ & $H=\{T_0, T_4, T_8, I_1, I_5, I_9\}$ \\ \hline
$T_1$ & $\{E,e,C,c,A\flat,a\flat\}$ & $\{T_0, T_4, T_8,I_3, I_7, I_{11}\}$ \\ \hline
$T_2$ & $\{F,f,C\sharp, c\sharp, A,a\}$ & $\{T_0, T_4, T_8, I_5, I_9, I_1 \}$ \\ \hline
$T_3$ & $\{F\sharp,f\sharp,D,d,B\flat,b\flat\}$ & $\{T_0, T_4, T_8, I_7, I_{11}, I_3\}$ \\ \hline
\end{tabular}$
  \caption{The 4 hexatonic cycles as $PL$-orbits and the respective dual groups determined as conjugations of $H$ via the Sub Dual Group Theorem of Fiore--Noll.}\label{table:SubDualGroupTheorem_Application}
\end{table}

The application of the Sub Dual Group Theorem to construct dual groups on octatonic systems is also treated in \cite{fiorenoll2011}, and utilized in \cite{fiorenollsatyendraSchoenberg}.
\end{rmk}

\begin{rmk}[Other Sources on Group Actions]
Music-theoretical group actions on chords have been considered by many, many authors over the past century. In addition to the selected references of Babbitt, Forte, and Morris above, we also mention the expansive and influential work of Mazzola and numerous collaborators \cite{MazzolaGruppenUndKategorien,MazzolaGeometrieDerToene,MazzolaToposOfMusic}. Moreover, Issue 42/2 of the {\it Journal of Music Theory} from 1998 is illuminating obligatory reading on groups in neo-Riemannian theory. That issue contains Clampitt's article \cite{clampittParsifal}, which is the inspiration for the present paper. Clough's article \cite{cloughRudimentaryGeometricModel} in that issue illustrates the dihedral group of order 6 and its recombinations with certain centralizer elements in terms of two concentric equilateral triangles (Clough's article does not treat hexatonic systems and duality). The dihedral group of order 6 is a warm-up for his treatment of recombinations of the {\it Schritt-Wechsel} group with the $T/I$-group, which are both dihedral of order 24. Peck's article \cite{peckGeneralizedCommuting} studies centralizers where the requirement of simple transitivity is relaxed in various ways, covering many examples from music theory. Peck determines the structure of centralizers in several cases.
\end{rmk}

\begin{rmk}[Discussion of Local Diatonic Containment of Hexatonic Cycles] \label{rmk:diatonic_containment}
No hexatonic cycle is contained entirely in a single diatonic set, as one can see from any of the cycles \eqref{equ:Hex1}--\eqref{equ:Hex4}. However, one can consider a sequence of diatonic sets that changes along with the
hexatonic cycle and contains each respective triad, as Douthett does in \cite[Table 4.7]{douthettFIPSandDynVoiceLeading}. After transposing and reversing Douthett's table, we see a sequence of diatonic sets such that each diatonic set contains the respective triad of \eqref{equ:Hex1}.
$$\begin{tabular}{|l|c|c|c|c|c|c|}
\hline
Triad & $E\flat$ & $e\flat$ & $B$ & $b$ & $G$ & $g$ \\ \hline
In Scale & $E\flat\text{-major}$ & $D\flat\text{-major}$ & $B\text{-major}$ & $A\text{-major}$ & $G\text{-major}$ & $F\text{-major}$ \\ \hline
\end{tabular}$$
This sequence of diatonic sets (indicated via major scales) descends by a whole step each time, so is as evenly distributed as possible.

Other diatonic set sequences also contain the hexatonic cycle, though unfortunately there is no maximally smooth cycle of diatonic sets that does the job (recall that the diatonic sets can only form a cycle of length 12). But it is possible to have a maximally smooth sequence of diatonic sets that covers four hexatonic triads. We list all possible
diatonic sets containing the respective hexatonic chords.\footnote{Recall that major chords only occur with roots on major scale degrees 1, 4, and 5, so we determine in the table the scales containing a given major triad by considering the root, a perfect fourth below the root, and a perfect fifth below the root. Minor scales can only occur with roots on major scale degrees 2, 3, and 6, so we determine in the table the scales containing a given minor triad by considering a major sixth below the root, a whole step below the root, and a major third below the root. This non-consistent major/minor ordering allows us to see (at double vertical dividing lines) all three maximally smooth transitions from diatonic sets containing a given a minor triad to a diatonic set containing its subsequent major in a hexatonic cycle. }
$$\begin{tabular}{|l|c|c||c|c||c|c||}
\hline
Triad           & $E\flat$ & $e\flat$ & $B$       & $b$ & $G$ & $g$ \\ \hline
                & $E\flat$ & $G\flat$ & $B$       & $D$ & $G$ & $B\flat$ \\
In Major Scales & $B\flat$ & $\mathbf{D\flat}$ & $F\sharp$ & $\mathbf{A}$ & $D$ & $\mathbf{F}$ \\
                & $\mathbf{A\flat}$ & $B$      & $\mathbf{E}$       & $G$ & $\mathbf{C}$ & $E\flat$ \\ \hline
\end{tabular}$$
Double vertical dividing lines indicate maximally smooth transitions between consecutive diatonic sets. As indicated by double vertical dividing lines, the transition from a minor triad to its subsequent major in a hexatonic cycle via $L$ is contained in three maximally smooth transitions of diatonic sets. On the other hand, the transition from a major triad to its subsequent minor in a hexatonic cycle via $P$ is contained in only one maximally smooth transition of diatonic sets, as indicated by the bold letters. Altogether, we can trace three maximally smooth chains of four major scales that contain part of the hexatonic cycle \eqref{equ:Hex1}. $$B-E-A-D$$ $$G-C-F-B\flat$$ $$E\flat-A\flat-D\flat-F\sharp$$

Local containment of hexatonic cycles in diatonic chains has ramifications for music analysis. Jason Yust proposes in \cite{YustOndine} and \cite{YustDistortion} to include diatonic contexts into analyses involving $PL$-cycles or $PR$-cycles, and he provides analytical tools to do so.
\end{rmk}

\section{Conclusion} \label{sec:Conclusion}

We began this article with Cohn's proposal that the maximal smoothness of consonant triads is a key factor for their privileged status in late-nineteenth century music. Indeed, consonant triads and their complements are the only tone collections that accommodate short maximally smooth cycles. The four maximally smooth cycles of consonant triads, the so-called hexatonic cycles of Cohn, can be described transformationally as alternating applications of the neo-Riemannian ``parallel'' and ``leading tone exchange'' transformations. Cohn interpreted Wagner's Grail motive in terms of a cyclic group action on the hexatonic cycle $\text{\it Hex}$, whereas Clampitt used the $PL$-group and the transposition-inversion subgroup we called $H$ in Proposition~\ref{prop:H-dihedral_of_order_6}. In the present article, we proved the Lewinian duality between these latter two groups, which was discussed by Clampitt in \cite{clampittParsifal}.

For perspective, we mention that simply transitive group actions correspond to the {\it generalized interval systems} of Lewin, see the very influential original source \cite{LewinGMIT}, or see \cite[Section~2]{fiorenollsatyendraSchoenberg} for an explanation of some aspects. Dual groups correspond to dual generalized interval systems: the transpositions of one system are the interval preserving bijections of the other. Clampitt \cite{clampittParsifal} explained the coherent perceptual basis of the three generalized interval systems associated to the three group actions on $\text{\it Hex}$ by Cohn's cyclic group, the $PL$-group, and the $H$ group. He employed the coherence of generalized interval systems to incorporate the final $D\flat$ of the Grail motive into his interpretation via a conjugation-modulation of a subsystem.

\section*{Acknowledgements}
This paper is the extension of an undergraduate research project of student Cameron Berry with Professor Thomas Fiore at the University of Michigan-Dearborn. Cameron Berry thanks Professor Thomas Fiore for all of the time he spent assisting and encouraging him during Winter semester 2014. We both thank Mahesh Agarwal for a suggestion to use a word-theoretic argument in Theorem~5.1 of \cite{cransfioresatyendra}, which we implemented in the present Proposition~\ref{prop:PL-dihedral_of_order_6}. We thank Thomas Noll for proposing and discussing the extended Remark~\ref{rmk:diatonic_containment} with us. Thomas Fiore thanks Robert Peck, Julian Hook, David Clampitt, and Thomas Noll for a brief email correspondence that lead to Proposition~\ref{prop:simple_transitivity_iff_each_stabilizer_trivial}~\ref{prop:simple_transitivity_iff_each_stabilizer_trivial:iv},
Proposition~\ref{prop:simple_transitivity_iff_each_stabilizer_trivial}~\ref{prop:simple_transitivity_iff_each_stabilizer_trivial:ii},
and Proposition~\ref{prop:dual_groups_ala_Lewin}, and positively impacted Proposition~\ref{prop:Commutativity_is_Enough_for_Dual_Groups}. Thomas Fiore thanks Ramon Satyendra for introducing him to hexatonic cycles and generalized interval systems back in 2004.

Both authors thank the two anonymous referees for their constructive suggestions. These lead to the improvement of the manuscript.

Thomas Fiore was supported by a Rackham Faculty Research Grant of the University of Michigan. He also thanks the Humboldt Foundation for support during his 2015-2016 sabbatical at the Universit{\"a}t Regensburg, which was sponsored by a Humboldt Research Fellowship for Experienced Researchers. Significant progress on this project was completed during the fellowship.


\end{document}